\numberwithin{equation}{section}
\newtheorem{theorem}{Theorem}[section]
\newtheorem{prop}[theorem]{Proposition}
\theoremstyle{definition}
\newtheorem{example}[theorem]{Example}
\def\<{{\langle}}
\def\>{{\rangle}}
\def\g{{\gamma}}
\def\w{{\mathbb T}}
\def\w{\wau}
\def\w{\omega}
\def\ni{\noindent} 
\begin{document}

\title{TANGLES AND LINKS: A VIEW WITH TREES}

\author{Daniel S. Silver 
\and Susan G. Williams\thanks {The authors are grateful for the support of the Simons Foundation.} }

\maketitle 
\begin{quote} \textit{The clearest way into the Universe is through a forest wilderness.} --  John Muir (1890)
\end{quote}
\bigskip
\begin{abstract} A result about spanning forests for graphs yields a short proof of Krebes's theorem concerning embedded tangles in links. \end{abstract}

\section{Introdution} \label{intro}  Let $G$ be a finite graph, possibly with multiple edges. We denote the vertex and edge sets by $V_G$ and $E_G$, respectively.  For $e \in E_G$, let $w_e \in R$ be \emph{edge weights}, elements of a commutative ring $R$. The \emph{(tree) weight} of $G$ is
$$\w_G =  \sum_T \prod_{e \in E_T} w_e,$$
where the summation is taken over all spanning trees of $G$.  When all edge weights $w_e$ are equal to $1$, the weight of $G$ is simply the number of spanning trees of the graph. If $G$ is not connected then its weight is $0$.

More generally, if $\g, \g'$ are subsets of $V_G$ of equal cardinality, then we define
$$\w_{G, \g, \g'} = \sum_F \prod_{e \in E_F} w_e,$$
where the summation is taken over all spanning forests $F$ consisting of trees with exactly one vertex in $\g$ and one vertex in $\g'$. The weight $\w_{G, \g, \g'}$ is easily computed from a Laplacian matrix of $G$ (see Section \ref{krebessection}).

When $\g = \g'$, the vertices in $\g$ are contained in separate trees of $F$. In this case we say that $F$ is \emph{rooted at} $\g$, and we  
shorten the notation $\w_{G, \g, \g}$ to $w_{G, \g}$.  Note that $\w_{G, \g}$  is simply $\w_G$ whenever $\g$ is a singleton.

We say $G$ is the \emph{join} of graphs $H, K$ if $G=H\cup K$ and $H\cap K$ is a single vertex. In this case, clearly $\w_G = \w_H \cdot \w_K$. The next proposition generalizes this observation.

\begin{prop}\label{main} Suppose $G$ is the union of two subgraphs $H, K$ such that $H \cap K = \g$, where $\g = \{1, 2, \ldots, n\}   \subset V_G$. 
\item{(i)} If $n =2$, then $\w_G= \w_H \cdot \w_{K, \g} + \w_{H, \g} \cdot \w_K$. 
\item{(ii)} If $n=3$, then $\w_G=$  \vspace{-.8 em} $$\w_H \cdot \w_{K, \g}+\w_{H, \{1,2\}} \cdot \w_{K,{\{1,3\},\{2,3\}}} + \w_{H, \{2,3\}} \cdot \w_{K,{\{1,2\},\{1,3\}}} +\w_{H, \{1,3\}} \cdot \w_{K,{\{1,2\},\{2,3\}}}+\w_{H, \g} \cdot \w_K.$$

\end{prop}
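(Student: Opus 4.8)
The plan is to decompose each spanning tree of $G$ according to how its edges distribute between $H$ and $K$. Since $H\cap K=\g$ contains no edges, the edge set of any subgraph of $G$ splits uniquely into $H$-edges and $K$-edges, so $\prod_{e\in E_T}w_e=\big(\prod_{e\in E_{F_H}}w_e\big)\big(\prod_{e\in E_{F_K}}w_e\big)$, where $F_H=T\cap H$ and $F_K=T\cap K$. First I would observe that if $T$ is a spanning tree then $F_H$ and $F_K$ are spanning forests of $H$ and $K$, and that every component of $F_H$ (resp.\ $F_K$) must meet $\g$: a component missing $\g$ would consist of vertices lying only in $H$, hence would be a separate component of $T$, contradicting connectedness.

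Next I would set up the gluing criterion. To the pair $(F_H,F_K)$ associate the bipartite multigraph $\hat G$ whose vertices are the components of $F_H$ together with the components of $F_K$, with one edge for each $i\in\g$ joining the $F_H$-component containing $i$ to the $F_K$-component containing $i$. A standard argument shows that $F_H\cup F_K$ is connected iff $\hat G$ is connected and acyclic iff $\hat G$ is acyclic (any cycle in the union alternates between $H$- and $K$-components meeting at $\g$-vertices, which is precisely a cycle in $\hat G$). Hence $T$ is a spanning tree of $G$ iff $\hat G$ is a tree. Counting edges, if $F_H$ has $c_H$ components and $F_K$ has $c_K$, then $\hat G$ has $c_H+c_K$ vertices and $n$ edges, so being a tree forces $c_H+c_K=n+1$. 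This reduces the sum defining $\w_G$ to a weighted sum over pairs of forests whose component counts add to $n+1$, which I would group by the partitions of $\g$ they induce.

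For $n=2$ the only options are $(c_H,c_K)=(1,2)$ and $(2,1)$: the first glues a spanning tree of $H$ to a forest of $K$ separating $1$ from $2$, giving $\w_H\cdot\w_{K,\g}$, and the second gives $\w_{H,\g}\cdot\w_K$, which is (i). For $n=3$ the options are $(1,3),(3,1),(2,2)$; the extreme cases immediately yield $\w_H\cdot\w_{K,\g}$ and $\w_{H,\g}\cdot\w_K$. The crux is $(2,2)$: here each of $F_H,F_K$ splits $\g$ into a pair-block and a singleton, and $\hat G$ (on four vertices, three edges) is a tree precisely when the two edges coming from the pair-block of $F_H$ are not parallel, i.e.\ when the pair-block of $F_H$ differs from the pair-block of $F_K$.

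The main obstacle, and the final step, is matching this ``distinct pair-block'' sum to the three middle terms of (ii). I would write $\phi^H_{ij}$ for the weight of spanning forests of $H$ whose nontrivial block is $\{i,j\}$, and verify two identities from the definition of the mixed forest weights: a rooted weight over-counts, $\w_{H,\{i,j\}}=\phi^H_{ik}+\phi^H_{jk}$ (separating $i$ and $j$ leaves the third vertex free), whereas a genuinely mixed weight pins the block down, $\w_{K,\{i,k\},\{j,k\}}=\phi^K_{ij}$ (the shared root $k$ forces $i,j$ together). Substituting these into the three products $\w_{H,\{1,2\}}\,\w_{K,\{1,3\},\{2,3\}}$, $\w_{H,\{2,3\}}\,\w_{K,\{1,2\},\{1,3\}}$, and $\w_{H,\{1,3\}}\,\w_{K,\{1,2\},\{2,3\}}$ expands each into two monomials $\phi^H_{P}\phi^K_{Q}$, and I expect the six resulting monomials to be exactly the six pairs with $P\ne Q$ produced by the gluing count, confirming (ii). I anticipate the bookkeeping in this last identification—tracking which pair/singleton configurations the over-counting rooted weights absorb—to be the only genuinely delicate point.
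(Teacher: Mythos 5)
Your proof is correct, and it rests on the same underlying decomposition as the paper's: a spanning tree $T$ of $G$ is cut into the forests $T\cap H$ and $T\cap K$, and one classifies which pairs of forests glue back to a spanning tree. Where you genuinely differ is in how the gluing condition is organized, and in the crux case of (ii). The paper argues directly by cases on the number of components of $T\cap K$ (one, two, or three); in the two-component case it reads off the terms of (ii) exactly as stated, since ``$1,2$ together and $3$ apart in $T\cap K$'' is precisely the condition defining $\w_{K,\{1,3\},\{2,3\}}$, and then $T\cap H$ must be rooted at $\{1,2\}$ --- so the product $\w_{H,\{1,2\}}\cdot\w_{K,\{1,3\},\{2,3\}}$ appears with no further manipulation. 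You instead prove a general gluing criterion (the bipartite component multigraph $\hat G$ must be a tree, forcing $c_H+c_K=n+1$), compute the case $(c_H,c_K)=(2,2)$ in the symmetric form $\sum_{P\ne Q}\phi^H_P\,\phi^K_Q$, and then reconcile this with the asymmetric statement via the identities $\w_{H,\{i,j\}}=\phi^H_{ik}+\phi^H_{jk}$ and $\w_{K,\{i,k\},\{j,k\}}=\phi^K_{ij}$. Both identities are right, and the matching you anticipate does hold: expanding the three middle terms of (ii) yields the monomials $\phi^H_P\phi^K_Q$ for exactly the six ordered pairs with $P\ne Q$, each once, so there is no delicate bookkeeping left. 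What your route buys: the $\hat G$ criterion and the count $c_H+c_K=n+1$ are uniform in $n$, so your argument explains where the cases come from and would extend to the $2n$-tangle generalization cited at the end of the paper; it also explains why the middle terms of (ii) look asymmetric (they are a regrouping of an intrinsically symmetric sum). What the paper's route buys: by grouping according to $T\cap K$ alone it lands on the stated formula immediately, with no auxiliary $\phi$-weights, which is why its proof fits in a few lines.
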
  

\begin{proof} (i) The union of a spanning tree of $K$ with a spanning forest of $H$ rooted at $\g$ is a spanning tree of $G$, as is also the union of a spanning tree of $H$ with a spanning forest of $K$ rooted at $\g$.  Conversely, every spanning tree $T$ of $G$ arises in exactly one way as a union of one of theses two types, depending on whether $T\cap K$ is connected or not.  The result follows by summing over these two cases separately in the definition of $\w_G$.

(ii) In a similar vein, consider a spanning tree $T$ of $G$. If $T \cap K$ is connected, then $T \cap H$ is a forest of three component trees, and vertices $1, 2, 3$ are in separate components.

Suppose $T \cap K$ has two components.  Then two elements of $\g$, say $1$ and $2$, lie in one component, and $3$ in the other.  Such a forest can be characterized as consisting of trees with exactly one vertex in each of the sets $\{1,3\}$, $\{2,3\}$. Now $T \cap H$ must be a spanning forest of $H$ rooted at $\{1,2\}$. There are two other analogous cases.

Finally, if $1,2,3$ lie in separate components of $T \cap K$, then $T \cap H$ must be a spanning tree of $H$.  
In each of these five cases we see that conversely, taking a union of spanning forests of the types described gives a spanning tree of $G$.  The result follows by summing over cases.
\end{proof} 

\section{Embedded 4-tangles in links}  \label{krebessection} A \emph{knot} is a circle smoothly embedded in the 3-sphere. More generally, a \emph{link} $\ell$ is a  finite collection of pairwise disjoint knots. Two links are equivalent if there is an  isotopy of $S^3$ that takes one link to the other. The simplest sort of link, the union of pairwise disjoint circles in the plane, said to be \emph{trivial}. Other links are called \emph{nontrivial}.

One attraction of knot theory is in the fact that any link $\ell$ can be described by a drawing or \emph{diagram}, a generic projection $D$ in the plane with a \textit{trompe l'oeil} device at each crossing indicating how one strand of the link passes over another. An example is seen in Figure \ref{knot}.

Tangles are close relatives of links. A \emph{$2n$-tangle} $t$, for $n$ a positive integer, consists of $n$ disjoint arcs and any finite number of simple closed curves properly embedded in the $3$-ball. The endpoints of the arcs are constrained to meet the ball's boundary along a great circle in $2n$ specified points. A pair of $2n$-tangles are equivalent if there is an isotopy of the ball, fixing points of the boundary, taking one $2n$-tangle to the other. 

Any $2n$-tangle can be represented by a diagram, similar to a link diagram, in a disk with endpoints on the boundary, $n$ points above and $n$ below. We can find $2n$-tangles in any link diagram $D$: a circle meeting $D$ in general position encloses a $2n$-tangle diagram, for some $n$. When this happens we say that $t$ \emph{embeds in} $\ell$. 

By a \emph{tangle} we mean a $4$-tangle. Any tangle diagram can be closed to a link $n(t)$ or $d(t)$ by connecting free ends in pairs of disjoint embedded arcs outside the diagram, as in Figure \ref{numden}. These are referred to as the \emph{numerator} and \emph{denominator} closures of $t$. In general, the number of closures of a $2n$-tangle diagram is the $n$th Catalan number  $C_n= \frac{(2n)!}{n! (n+1)!}$. 

\begin{figure}
\begin{center}
\includegraphics[height=1.2 in]{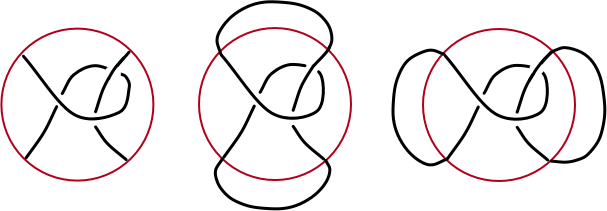}
\caption{Tangle $t$ (left); numerator closure $n(t)$ (center);  denominator closure $d(t)$ (right)}
\label{numden}
\end{center}
\end{figure}

If a knot diagram $D$ has many crossings, then it can be a formidable task to ascertain that the knot is nontrivial. One might ask if it is possible to decide based upon the discovery of a particular type of $2n$-tangle embedded in the link. When $n=1$, the problem was solved by H. Schubert,  who showed in 1949  \cite{BZ03} that any knot containing a nontrivial $2$-tangle (a ``local knot")  is itself nontrivial. 

In his Ph.D. dissertation \cite{K99} D. Krebes showed how a nontrivial embedded tangle may force a knot to be nontrivial. Krebes employed the \emph{determinant} ${\rm Det}(\ell)$ of a link, a well-known \emph{numerical link invariant}, a numerical quantity that depends only on the link and not the diagram considered.

The determinant of a link $\ell$ has several equivalent definitions. The most elementary begins with a checkerboard shaded diagram $D$ of the link and associates a  \emph{Tait graph} $G$: vertices correspond to shaded faces; edges correspond to crossings, joining shaded faces that share a crossing and labeled with weights $+1$ or $-1$ depending on the sense of the crossing (Figure \ref{crossing}). (Unshaded faces can be used in place of shaded ones, resulting in a dual Tait graph for $\ell$.)  Next one builds a matrix $M$ with rows and columns indexed by vertices: the $i$th diagonal entry is the sum of weights of non-loop edges incident to the $i$th vertex; for $i \ne j$ the $i,j$th entry is $-1$ times the sum of weights of edges joining the $i$th vertex to the $j$th. Knot theorists recognize $M$ as an \emph{unreduced Goeritz matrix} of the link $\ell$. The principal minors of $M$ are equal and ${\rm Det}(\ell)$ is defined to be their absolute value  (see \cite{BZ03}). 

\begin{example} \label{ex} The left-hand side of Figure \ref{knot} is a diagram of a knot $k$.
In the center is a checkerboard-shaded diagram with shaded faces numbered, and on the right is the associated Tait graph $G$. (Unlabeled edges are taken to have weight 1.) The unreduced Goeritz matrix is $$M = \begin{pmatrix} 
0 & 1 & -1 & -1 & 0 & 0 & 1\\
1 & 0 & 0 & 0 & -1&-1& 1 \\
-1 & 0 & 2 & 0 & -1 & 0 & 0\\
-1 & 0 & 0 & 2 & 0 & -1 & 0 \\
0 & -1 & -1 & 0 & 0 & 2 & 0 \\
0 & -1 & 0 & -1 & 2 & 0 & 0 \\
1 & 1 & 0 & 0 & 0 & 0 & -2 
\end{pmatrix}$$
\noindent Any principal minor is 25, the determinant of $k$.

\end{example}

\begin{figure}
\begin{center}
\includegraphics[height=1  in]{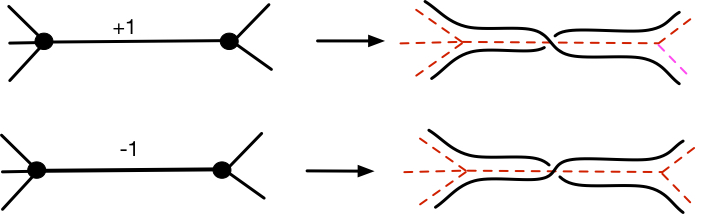}
\caption{Crossing convention}
\label{crossing}
\end{center}
\end{figure}

\begin{figure}
\begin{center}
\includegraphics[height=1.6 in]{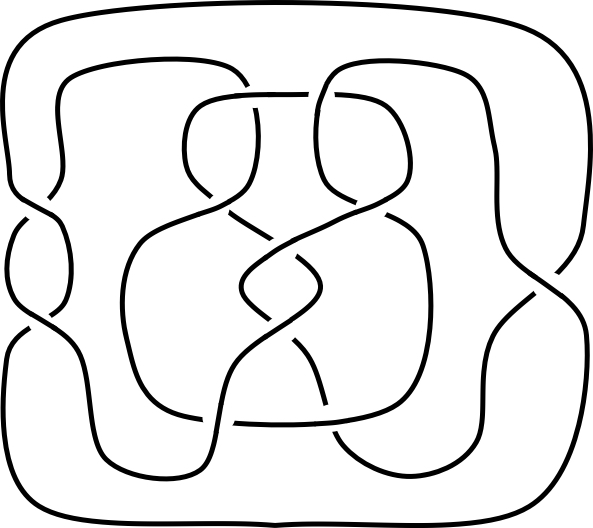}\quad \includegraphics[height=1.6  in]{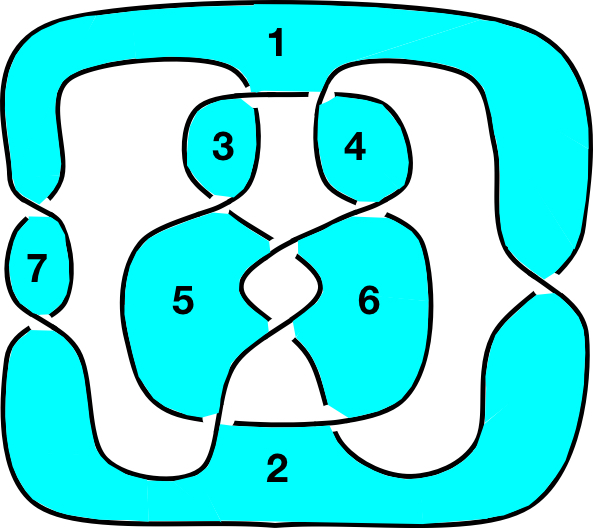}\quad \includegraphics[height=1.6  in]{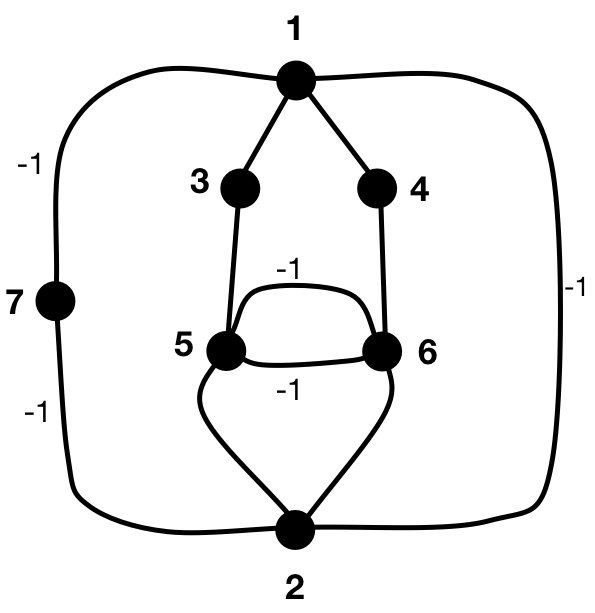}
\caption{Knot $k$; checker-board shaded diagram;  Tait graph}
\label{knot}
\end{center}
\end{figure}

Graph theorists recognize $M$ as the  \emph{Laplacian matrix} of $G$. By  Kirchhoff's celebrated Matrix Tree Theorem (see for example Theorem 2 of \cite{CK78}) any principal minor of $M$ is equal to the tree weight $\w_G$, so ${\rm Det}(\ell)=|\w_G|$. More generally, the All Minors Matrix Tree Theorem states that if $\g$ and $\g'$ are subsets of $V_G$ of equal cardinality, as in section \ref{intro}, the minor obtained by deleting the rows  and columns corresponding to $\g$ and $\g'$, respectively, is equal to $\pm \w_{G, \g,\g'}$. The sign is easily computed (see \cite{CK78}).

\begin{theorem}\label{krebes} \cite{K99} Let $\ell$ be a link. If a tangle $t$ embeds in $\ell$, then any common divisor of ${\rm Det}(n(t))$ and ${\rm Det}(d(t))$ divides ${\rm Det}(\ell)$. \end{theorem}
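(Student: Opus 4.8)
The plan is to translate the topological statement into the spanning-forest identity of Proposition~\ref{main}(i), after which the divisibility is essentially formal. First I would fix a checkerboard-shaded diagram $D$ of $\ell$ in which the tangle $t$ is enclosed by a circle $c_0$ meeting $D$ transversally in the four prescribed points, and let $G$ be the resulting Tait graph. Because $c_0$ crosses $D$ only along its strands, it runs between consecutive intersection points through a single face of $D$, so it passes through exactly four faces, which alternate shaded and unshaded; thus precisely two shaded faces are pierced by $c_0$. Let $\g=\{1,2\}$ be the corresponding pair of vertices of $G$. Every crossing of $D$ lies strictly inside or strictly outside $c_0$, giving subgraphs $H$ (inside) and $K$ (outside) that share no edges, with $H\cap K=\g$ and $G=H\cup K$. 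This is exactly the hypothesis of Proposition~\ref{main}(i), so $\w_G=\w_H\cdot \w_{K,\g}+\w_{H,\g}\cdot \w_K$.

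The heart of the argument is to recognize the two closures of $t$ among these weights. The numerator and denominator closures differ precisely in how they cap off $c_0$: one of them joins the two pierced shaded faces through a shaded region lying outside $c_0$, while the other leaves them in separate shaded faces. In the first case the two boundary vertices are amalgamated, so the Tait graph of the closure is the contraction $H/\g$; since a spanning tree of $H/\g$ pulls back to a spanning forest of $H$ with one tree containing $1$ and one containing $2$ --- that is, a forest rooted at $\g$ --- and the edge weights are unchanged, its tree weight equals $\w_{H,\g}$. In the second case no crossings and no new shaded faces are introduced, so the Tait graph is $H$ itself and the tree weight is $\w_H$. Hence $\{{\rm Det}(n(t)),{\rm Det}(d(t))\}=\{|\w_H|,\,|\w_{H,\g}|\}$.

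With this dictionary the theorem follows at once. If $c$ is a common divisor of ${\rm Det}(n(t))$ and ${\rm Det}(d(t))$, then $c$ divides both $\w_H$ and $\w_{H,\g}$, hence divides each of the two summands $\w_H\cdot\w_{K,\g}$ and $\w_{H,\g}\cdot\w_K$, and therefore divides $\w_G$. Since ${\rm Det}(\ell)=|\w_G|$, we conclude $c\mid{\rm Det}(\ell)$.

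The step I expect to require the most care is the identification in the middle paragraph: verifying rigorously that, after the shading is fixed, the two standard closures realize exactly the operations ``contract $\g$'' and ``leave $\g$ intact'' on the Tait graph, with no spurious extra vertices or edges contributed by the outer region, and that the pierced shaded faces are genuinely two distinct vertices. Degenerate configurations --- for instance when a single shaded face meets $c_0$ along both of its shaded arcs, so that the two vertices coincide --- should be checked separately. Everything else is a formal consequence of Proposition~\ref{main}(i) together with the identity ${\rm Det}(\ell)=|\w_G|$ furnished by the Matrix Tree Theorem.
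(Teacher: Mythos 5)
Your argument is the same as the paper's: decompose the Tait graph as $G = H \cup K$ with $H \cap K = \g = \{1,2\}$, identify $|\w_H|$ and $|\w_{H,\g}|$ with ${\rm Det}$ of the two closures of $t$, and feed this into Proposition~\ref{main}(i) together with ${\rm Det}(\ell)=|\w_G|$. The one local difference is how you justify that the ``amalgamated'' closure has determinant $|\w_{H,\g}|$: you argue directly that spanning trees of the contraction $H/\g$ pull back bijectively to spanning forests of $H$ rooted at $\g$ (correct, since acyclicity in $H/\g$ forces $1$ and $2$ into distinct components), whereas the paper manipulates the Goeritz/Laplacian matrix and invokes the All Minors Matrix Tree Theorem. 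Your version of that step is, if anything, more elementary, and the unordered identification $\{{\rm Det}(n(t)),{\rm Det}(d(t))\}=\{|\w_H|,|\w_{H,\g}|\}$ suffices because the conclusion is symmetric in the two closures.

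The only genuine deficiency is the degenerate case you flag in your last paragraph but do not actually prove: when the two shaded arcs of $c_0$ lie in a single face of the diagram, so that $\g$ collapses to one vertex, the hypothesis of Proposition~\ref{main}(i) fails and your main argument does not apply. This configuration really occurs, so as written the proof is incomplete. It is closed in one stroke, and this is exactly what the paper does: in that case $G$ is the \emph{join} of $H$ and $K$ at the single shared vertex, so $\w_G = \w_H \cdot \w_K$; moreover $H$ is then the Tait graph of the closure that merges the two shaded arcs (with the paper's shading convention, $d(t)$), so ${\rm Det}(d(t)) = |\w_H|$ divides $|\w_G| = {\rm Det}(\ell)$, and a fortiori so does any common divisor of ${\rm Det}(n(t))$ and ${\rm Det}(d(t))$. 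With that paragraph added, your proof is complete and coincides with the paper's.
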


\begin{figure}
\begin{center}
\includegraphics[height=1.6 in]{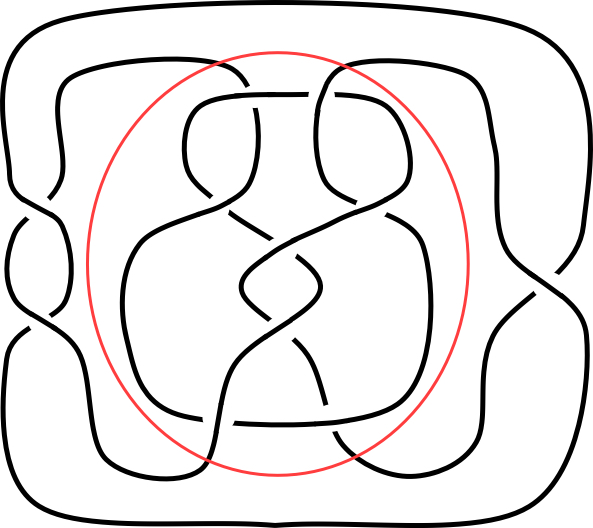}\quad \includegraphics[height=1.6  in]{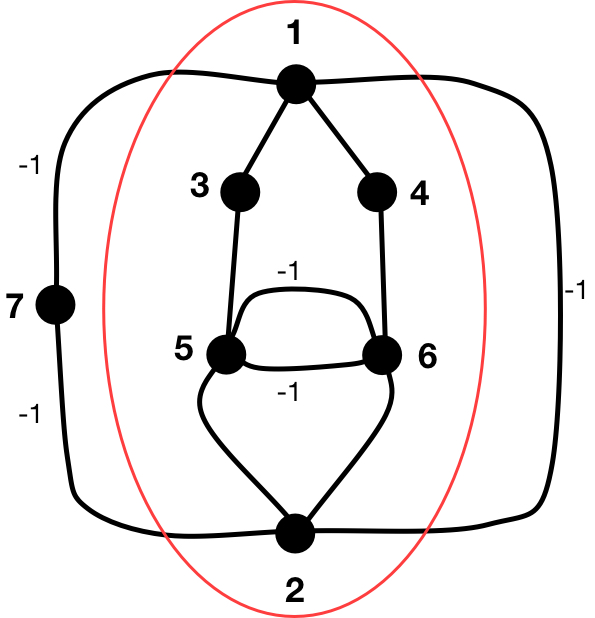}\quad  \includegraphics[height=1.6  in]{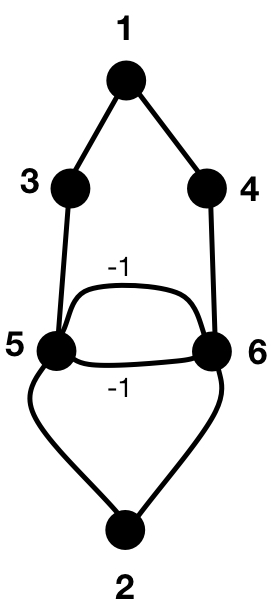}
\caption{Embedded tangle (circled, left); corresponding subgraph (center and right)}
\label{tangle}
\end{center}
\end{figure}

Krebes's proof of Theorem \ref{krebes} in \cite{K99} used the Kauffman bracket and skein theory. A shorter proof involving covering spaces and homology was given later by D. Ruberman \cite{R00}. Here we see Krebes's theorem as a direct consequence of Proposition \ref{main}. 

\begin{proof} Let $C$ be a circle that meets the diagram $D$ of the link $\ell$ transversely, enclosing a diagram of the tangle $t$.  The endpoints of $t$ divide $C$ into four segments.  We may checkerboard shade the diagram so that the faces containing the top and bottom segments of $C$ are shaded. We first suppose that these regions are distinct, and label them $1, 2$. 
Extend the numbering to the shaded faces that are enclosed by $C$, and denote by $H$ the subgraph of the Tait graph $G$ of $\ell$ containing the vertices and edges between them (as in the right-hand side of Figure \ref{tangle}). Then $H$ is a Tait graph of $n(t)$, so its Laplacian matrix is an unreduced Goeritz matrix for $n(t)$ and ${\rm Det}(n(t))=|\w_H|$.  The graph $G$ is the union of $H$ with a graph $K$ that meets $H$ in $\{1,2\}$.

Amalgamating the vertices $1, 2$ of $H$ produces the Tait graph of $d(t)$. Hence we obtain an unreduced Goeritz matrix for $d(t)$ by adding the second row and column of the Laplacian matrix of $H$ to the first row and column, respectively, and then deleting them. The absolute value of the first principal minor of the result is ${\rm Det}(d(t))$. However, deleting from $H$ the first two rows and columns and then taking the determinant produces the same end-result which, by the All Minors Matrix Tree Theorem, is $\w_{H, \g}$, where $\g =  \{1,2\}$. 

We have shown that $|\w_H|= {\rm Det}(n(t))$ and also $|\w_{H, \g }| = {\rm Det}(d(t))$, where $\g= \{1,2\}$. Since $|\w_G| = {\rm Det}(\ell)$, the desired conclusion now immediately follows from Proposition \ref{main} (i). 

We must still deal with the case where instead of distinct regions 1, 2 as above we have a single region 1.  In this case $G$ is the join of $H$ and $K$ at 1, and $H$ is the Tait graph of $d(t)$. Since $\w_G = \w_H \cdot \w_K$ we conclude that ${\rm Det}(d(t))$ divides ${\rm Det}(\ell)$.
\end{proof}

\noindent {\bf Example \ref{ex}} (continued). The left-hand side of Figure \ref{tangle} shows a tangle $t$ embedded in the knot $k$. In the center is the Tait graph $G$ that appeared previously, with a subgraph $H$ corresponding to $t$ circled in red. The subgraph is shown in the right-hand side of the diagram. 
Both ${\rm Det}(n(t))=25$ and ${\rm Det}(d(t))=30$ are easily computed from the matrix $M$. By Theorem \ref{krebes} 
any knot or link in which $T$ embeds has determinant divisible by $5$. One such knot is $k$ (above), which 
has determinant 25.

\bigskip

Corollary 5.2 (ii) of \cite{PSW05} uses algebraic topology to generalize Krebes's theorem for any $2n$-tangle. There the any common divisor of the determinants of all of its closures divides the determinant of any link in which the $2n$-tangle embeds. Part (ii) of Proposition \ref{main} gives an elementary proof of the result for $n=3$.  For this one considers the five 
tangle closures of the $6$-tangle. The proof is similar to that of Theorem \ref{krebes} and we leave it to the reader. 

\begin{theorem} \cite{PSW05} Let $\ell$ be a link. If a $6$-tangle $t$ embeds in $\ell$, then any common divisor of the determinants of the five closures of $t$ divides ${\rm Det}(\ell)$. \end{theorem}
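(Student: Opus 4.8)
The plan is to run the proof of Theorem \ref{krebes} almost verbatim, substituting Proposition \ref{main}(ii) for part (i). First I would take a circle $C$ meeting the diagram $D$ of $\ell$ transversely and enclosing a diagram of the $6$-tangle $t$; its six endpoints cut $C$ into six arcs. I would checkerboard shade $D$ so that three alternating arcs are shaded, and first treat the generic case in which the three shaded faces meeting these arcs are distinct. Labelling them $1,2,3$, extending the numbering to all shaded faces enclosed by $C$, and letting $H$ be the subgraph of the Tait graph $G$ on those vertices together with the edges between them, I would obtain $G = H \cup K$ with $H \cap K = \g = \{1,2,3\}$, which is precisely the hypothesis of Proposition \ref{main}(ii).

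The substantive step is to match the five closures of $t$ with the five terms of Proposition \ref{main}(ii). The six endpoints admit exactly $C_3 = 5$ non-crossing pairings, and I would associate to each the partition of $\{1,2,3\}$ recording which of the shaded boundary faces become joined once the free ends are connected outside $C$. Because every partition of a three-element set is non-crossing, this yields a bijection onto the five partitions $\{1\}\{2\}\{3\}$, $\{1,2\}\{3\}$, $\{1,3\}\{2\}$, $\{2,3\}\{1\}$, $\{1,2,3\}$. For the closure attached to a given partition, amalgamating the vertices of $H$ within each block produces an unreduced Goeritz matrix of that closure; deleting appropriate rows and columns and invoking the All Minors Matrix Tree Theorem then identifies its determinant, in absolute value, with $\w_H$, $\w_{H,\{1,2\}}$, $\w_{H,\{1,3\}}$, $\w_{H,\{2,3\}}$, or $\w_{H,\g}$ respectively, exactly as the cases $n(t)$ and $d(t)$ were handled for the $4$-tangle.

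With these five identifications in hand the conclusion is immediate. Proposition \ref{main}(ii) expresses $\w_G$ as a sum of five products, in each of which one factor is one of the five $H$-weights just listed and the other is an integer forest weight of $K$. Hence any common divisor of the five closure determinants divides every summand, so it divides $\w_G$ and therefore ${\rm Det}(\ell) = |\w_G|$. The degenerate configurations, in which two or all three of the shaded boundary faces coincide, I would dispose of exactly as in the single-region case closing the proof of Theorem \ref{krebes}: there $G$ becomes a join of $H$ and $K$ along the repeated vertex or vertices, $\w_G$ factors after the appropriate amalgamation, and the relevant closure determinants divide $\w_G$ directly.

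I expect the only real obstacle to lie in the bookkeeping of the second paragraph: one must verify that the non-crossing pairing realizing each closure induces precisely the claimed partition of $\{1,2,3\}$ and that amalgamating $H$ along that partition genuinely reproduces the Tait graph of the closure. This is a purely topological reading-off of the five pictures, with no new idea beyond the $4$-tangle argument; once it is secured, Proposition \ref{main}(ii) carries out all of the algebra.
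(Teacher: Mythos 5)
Your main argument is correct and is precisely the proof the paper has in mind (the paper only sketches it, explicitly leaving the details to the reader): the five non-crossing closures of the six endpoints realize the five partitions of $\{1,2,3\}$, amalgamating the vertices of $H$ within the blocks of each partition produces the Tait graph of the corresponding closure, and the All Minors Matrix Tree Theorem identifies the five closure determinants with $|\w_H|$, $|\w_{H,\{1,2\}}|$, $|\w_{H,\{1,3\}}|$, $|\w_{H,\{2,3\}}|$ and $|\w_{H,\g}|$. Since these are, up to sign, exactly the $H$-factors of the five terms in Proposition \ref{main}(ii), while the $K$-factors are integers, any common divisor of the closure determinants divides $\w_G$ and hence ${\rm Det}(\ell)$.

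The one genuine flaw is in your treatment of the degenerate configurations. If exactly two of the three shaded boundary faces coincide while the third is distinct, then $H \cap K$ consists of two vertices, so $G$ is \emph{not} a join and $\w_G$ does \emph{not} factor as a product after any amalgamation; the failure of such factorizations across cuts of more than one vertex is the entire reason Proposition \ref{main} exists. The repair stays within your toolkit: apply Proposition \ref{main}(i) to the two-vertex intersection $\g'$, obtaining $\w_G = \w_H \cdot \w_{K,\g'} + \w_{H,\g'} \cdot \w_K$. Here $H$ is the Tait graph of the closure realizing the partition that merges the two coincident faces, and $\w_{H,\g'}$ is, up to sign, the determinant of the total closure, so both summands are divisible by the assumed common divisor. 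Only when all three boundary faces coincide is $G$ a genuine join, and only there is your factorization argument valid as stated.
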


\section*{Acknowledgement}  The authors are grateful to David Krebes for insightful suggestions concerning an earlier version of this article.

\bigskip

\ni Department of Mathematics and Statistics,\\
\ni University of South Alabama\\ Mobile, AL 36688 USA\\
\ni Email: 
\ni  silver@southalabama.edu\\
\ni swilliam@southalabama.edu
\end{document}